\documentclass[12pt,reqno]{amsart}
\usepackage[hmarginratio=1:1]{geometry}

\usepackage{amsmath}
\usepackage{amsfonts,mathtools}
\usepackage[all]{xy}
\usepackage{amssymb}
\usepackage{xcolor}
\usepackage{dsfont}
\usepackage{color}
\usepackage{cite}
\usepackage{graphicx}
\usepackage{braket}
\usepackage{slashed}
\usepackage{float}
\usepackage{braket}
\usepackage{color}
\usepackage{upgreek}
\usepackage{hyperref}
\usepackage{physics}
\usepackage{tikz}
\usepackage{tikz-cd}
\def\hall{\text{Hall}_{\text{mot}} (\rep)}
\def\mod{\text{Mod-}kQ}

\hypersetup{
    colorlinks,
    linkcolor={blue!50!red},
    citecolor={red!70!red},
    urlcolor={orange!80!black}
} 

\def\a{\alpha}

\def\vir{\text{vir}}

\def\dim{\text{dim}}

\def\Ext{\text{Ext}}
\def\Hom{\text{Hom}}
\def\Aut{\text{Aut}}
\def\End{\text{End}}
\def\ext{\epsilon_{A,B}}

\def\be{\begin{equation}}
\def\ee{\end{equation}}

\def\A{\mathbb{A}}

\def\ko{K_0(\text{Var}_k)}

\def\varki{\text{Var}_k}

\def\a{\mathbb{A}}
\def\P{\mathbb{P}}
\def\Hilb{\mathbb{\textbf{Hilb}}}

\def\L{\mathbb{L}}
\def\M{\mathcal{M}}
\def\ak{\mathbb{A}_k}
\def\md{\left[ \frac{R_d(Q)}{G_d} \right]}
\def\rep{\text{Rep}_k(Q)}
\def\la{\Lambda}

\theoremstyle{definition}
\newtheorem{point}{Point}

\newtheorem{remark}{Remark}
\newtheorem{theorem}{Theorem}

\newtheorem{lemma}[point]{Lemma}
\newtheorem{corollary}{Corollary}
\newcommand\blfootnote[1]{
    \begingroup
    \renewcommand\thefootnote{}\footnote{#1}
    \addtocounter{footnote}{-1}
    \endgroup
}

\begin{document}

\title[Hall Geometry and Auslander-Reiten Quiver]{Hall Geometry and\\ Auslander-Reiten Quiver}

\author[A Verma]{Aayush Verma}
\email{aayushverma6380@gmail.com}

\begin{abstract}
    We show how the geometric information in the motivic Hall algebra and the correspondence of the moduli stack recovers the Auslander-Reiten sequences and the Auslander-Reiten quiver.
\end{abstract} 

\maketitle
 \tableofcontents

 \vspace{-.6in}



\blfootnote{April 2026\\
    \leftskip=0cm
\rightskip=6cm
    \indent \it  ``Now, that is not wisely asked; has it not been told thee, that the gods made a bridge from earth, to heaven, called Bifröst?'' --- Gylfaginning, Prose Edda
}

\section{Introduction}
We will start with the definition of the Grothendieck ring of varieties $K_0(\text{Var}_k)$. Let $k$ be a field. We define a free abelian group generated by isomorphism classes $[X]$ where $X$ is a variety, with some relations, known as the cut and paste relation (or scissors relation), for every closed subvariety $Y$ of $X$, we write $[X] = [Y] + [X\backslash Y]$. The ring structure is given by $[X] \cdot [Y] \coloneq [X \times_k Y]$. 
This section will be rather pedagogical; the main result can be found in the next section.

We define the Lefschetz motive as $\L \coloneq [\ak^1]$ and define a localized ring $\M \coloneq \ko [\L^{-1}]$. The class $[X] \in \ko$ is called {\it motive/class} of variety $X \in \varki$. For example, the class $[\ak^n] = [\ak^1 \times \ak^2 \dots \times \ak^n] \cong \L^n$. Another example is to consider $\mathbb{P}^n$ which is 
\begin{align}
\mathbb{P}^n &= \a^n \sqcup \mathbb{P}^{n-1}\\
&=\a^n \sqcup \a^{n-1} \sqcup \dots \sqcup \a^0
\end{align}
so
\begin{equation}
[\mathbb{P}^n] = 1 + \L + \L^2 + \dots + \L^n
\end{equation}
which can be written as a geometric series  (when $(\L-1)$ is inverted)
\begin{equation}
[\mathbb{P}^n] = \frac{\L^{n+1}-1}{\L-1}
\end{equation}
Due to the result of \cite{borisov2014class}, we know that inverting $\L$ kills some non-zero classes, as $\L$ is a known zero-divisor in $\ko$ over fields of characteristic zero. Hence, there is a loss of information in the localized ring. In other words, the map $\ko \to \ko [\L^{-1}]$ is not injective for the field of characteristic zero.

We define a virtual motive for $[X]$ if $X$ is irreducible
\begin{equation}
[X]_{\vir} \coloneq (- \L^{-\dim X/2}) [X] \in \M
\end{equation} 
which is a dimension-dependent twist by $(- \L^{-\dim X/2})$. The square root of $\L$ and its multiplicative inverse are important in the theory of the Donaldson-Thomas invariants. For example, for $\P^1$, the virtual motive $[\P^1]_\vir = -(\L^{-1/2}+\L^{1/2})$. But here, technically, it is just a normalization because there is no singularity in $\P^1$. For another non-trivial virtual class, $[\P^2]_\vir = -(\L^{-1} + 1 +\L^{1})$. The first non-trivial and non-smooth example would be to consider $[\Hilb^2(\a^3)]_\vir$, see \cite{behrend2013motivic}, but we will not do it explicitly here.

Now, if $k=\mathbb{F}_q$, then a measure taking value in $\mathbb{Z}$ is just the counting measure $K_0(Var_k) \to \mathbb{Z}$, $[X] \mapsto \#X(\mathbb{F}_q)$. The Euler characteristic is also a measure (assume $k=\mathbb{C}$) $\chi_{\mathbb{C}}:K_0(var_{\mathbb{C}}) \to \mathbb{Z}$, $[X] \mapsto \chi_{\mathbb{C}}(X)$. And the Hodge-Deligne polynomial is also a measure encoding mixed Hodge numbers $K_0(Var_k) \to \mathbb{Z}[u,v]$, and this was the central theme of the birational Calabi-Yau discussion of string theory.

A {\it Quiver} $Q$ is a multi-directed graph with a set of vertices $Q_0$ and a set of arrows $Q_1$. For example, $A_3$ is
\[\begin{tikzcd}[ampersand replacement=\&]
    1 \& 2 \& 3
    \arrow["\alpha", from=1-1, to=1-2]
    \arrow["\beta", from=1-2, to=1-3]
\end{tikzcd}\]
and $K_2$ is
\[
\begin{tikzcd}
1
\arrow[r, "\alpha", shift left=1.5]
\arrow[r, "\beta"', shift right=1.5]
&
2
\end{tikzcd}
\]

We write a representation $V$ of $Q$ which consists of finite-dimensional vector spaces $V_i$ for each vertex $i \in Q_0$ and a linear map $T_\alpha: V_i \to V_j$ for $\alpha: i  \to j$ in $Q_1$. Two representations $((V_i),(T_\alpha))$ and $((V'_i),(T'_\alpha))$ are equivalent if there exists $\varphi_i$ and $\varphi_j$ such that the following diagram commutes for each $V_i$ and $T_\alpha$
\[\begin{tikzcd}
    {V_i} & {V_j} \\
    {V'_i} & {V'_j}
    \arrow["{T_\alpha}", from=1-1, to=1-2]
    \arrow["{\varphi_i}"', from=1-1, to=2-1]
    \arrow["{\varphi_j}", from=1-2, to=2-2]
    \arrow["{T'_\alpha}"', from=2-1, to=2-2]
\end{tikzcd}\]
We define the dimension vector $d \in \mathbb{N} Q_0$ of a quiver representation as a tuple $d = (\dim\ V_i)_{i \in Q_0}$. If we fix vector spaces $V_i$ on $Q$, then we write the representation space
\begin{equation}
R_d(Q) \coloneq \bigoplus_{\alpha: i \to j} Hom (V_i, V_j)
\end{equation}
so that the vector spaces $V_i$ are just points in an affine space. If $V_i \cong k^{d_i}$, then $R_d(Q) \cong \ak^{\sum_{\alpha:i \to j} d_i d_j}$.

\section{From Moduli Stacks to Auslander-Reiten Quiver}

In this section, we first recall the motivic Hall algebra and Hall correspondence\footnote{Or simply just correspondence of stacks}, identify its fibres, then recover the Auslander-Reiten sequences (henceforth, AR sequences) and AR quiver using AR theory and the coarse moduli space, culminating in Theorem~\ref{theorem1:label} for Dynkin quivers. The general results used in this work about Auslander-Reiten theory can be found in \cite{assem2006elements}.

Classification of representations has been a central theme in the study of quivers. For Dynkin quivers (those with finite representation type), it is given by Gabriel's theorem \cite{gabriel1972unzerlegbare}. But a more algebro-geometric approach for quiver representations is presented using the following. Fix a dimension vector $d$, and we define a reductive algebraic group
\begin{equation}
G_d  = \Pi_{i \in Q_0} GL(V_i)
\end{equation}
which acts on $R_d(Q)$ by a Q-graded analogue of conjugation
\begin{equation}
(g_i) \cdot (f_\alpha) \coloneq (g_j f_\alpha g^{-1}_i)_{\alpha:i \to j}
\end{equation}
and we will identify this as a base-change action. The orbits of this action will be the isomorphism class of representations of a fixed $d$ of the quiver $Q$.

The topology on the orbit space $R_d(Q)/G_d$ is not really helpful, as some non-isomorphic representations can lie in orbit-closure relations. Let us define a quotient (algebraic) stack $\M_d = \md$, which will be a moduli stack. But there is an interesting line of thought about motivic invariants that begins with the consideration of a formal power series. For a symmetric matrix $A$ defining a quiver, we write a partition function\footnote{This partition function formally takes value in a motivic quantum space.} or a formal power series, following \cite{reineke2024donaldson}
\begin{equation}
P_A(x) \coloneq \sum_{d \in \mathbb{N} Q_0} \frac{[R_d(Q_A)]_{\vir}}{[G_d]_\vir} x^d
\end{equation}
where $x^d = x_1^{d_1} \cdot \ldots \cdot x_n^{d_n}$ are monomials. Note that the moduli stack $\M_d$ and the coefficients $[R_d(Q_A)]_{\vir}$, $[G_d]_{\vir}$ are different objects. While the former is geometric, the latter is more enumerative; precisely, the coefficients are virtual motivic classes that we have defined earlier. Both $\M_d$ and $P_A(x)$ can be used to count the quiver representations, and they lead to some very interesting stories. The formal power series is a motivic shadow of counting quiver representations. Let us stick to the moduli stack $\M_d$ in this paper, as our motivation is to pass to the homological information.

Let us take a basic example of $A_2$ with linear orientation\footnote{Reineke's \cite{reineke2024donaldson} definition of $P_A(x)$ is for symmetric quivers, hence there is nothing for us to comment on that. Anyway, we will not discuss it in this note.}
\[\begin{tikzcd}
    1 & 2
    \arrow["\alpha", from=1-1, to=1-2]
\end{tikzcd}\]
and say $d=(1,1)$, then $R_d(Q) \cong \ak^1$ and $G_d = GL_1(k) \times GL_1(k) \cong \mathbb{G}_m \times \mathbb{G}_m$ which acts by $(g_1, g_2) \cdot (\lambda) = g_2 g_1^{-1}\lambda$. There are exactly two orbits for this example, the one corresponding to the split representation $S_1 \oplus S_2$ and the one corresponding to the indecomposable (projective) representation $P_1$. This is, in fact, true that $A_2$ has three indecomposables which are, as represented by dimension vectors, $S_1 = (1,0)$, $S_2 = (0,1)$, and $P_1 = (1,1)$. But the moduli stack $\M_d = \left[  \frac{\ak^1}{(\mathbb{G}_m \times \mathbb{G}_m)} \right]$ also remembers the stabilizers at points and records their automorphisms as different strata. Another reason for this moduli stack rather than a coarse quotient or orbit space will become clear in our next discussion, which is about the motivic Hall algebra.

We wish to understand how quiver representations connect with each other and what the structure of the category of representations $\rep$ (or equivalently, $\mod$, which is why in this paper, we will {\it often} interchange between each other) is. For this, we would like to study the extensions of representations, like $Ext^1_{\rep}(A,B)$
\begin{equation}\label{ses}
\xi_E: 0 \to B \to E \to A \to 0.
\end{equation}
We define $\M = \bigsqcup_{d \in \mathbb{N}Q_0} \M_d$ of all the finite dimensional representations of $Q$. (Quite formally, we define $\M^{(n)}$ as the moduli stack of n-flags of coherent sheaves on a base scheme \cite{Joyce:2005ta,bridgeland2012introduction}.) Let us take $\M^{(2)}$, which will be the stack of short exact sequences in $\rep$ in the Joyce-Bridgeland sense in our quiver setting. Higher $n$-flags also exist, but we are not interested in them here; for instance, $\M^{(3)}$ will be a moduli stack of $2-$steps filtrations.

Let us work with stacks that are locally of finite type. We define a correspondence of stacks, which can be called the Hall correspondence
{\label{stacks:label}
\[\begin{tikzcd}
    {\mathcal{M}^{(2)}} & {\mathcal{M}} \\
    {\mathcal{M} \times \mathcal{M}}
    \arrow["e", from=1-1, to=1-2]
    \arrow["{(b,a)}"', from=1-1, to=2-1]
\end{tikzcd}\]}
where the morphisms $a,b,e$ takes a short exact sequence \eqref{ses} to its constituent objects
\[\begin{tikzcd}
    0 & B & E & A & 0 \\
    {(B,A)} &&&& E
    \arrow[from=1-1, to=1-2]
    \arrow[from=1-2, to=1-3]
    \arrow[from=1-3, to=1-4]
    \arrow["{(b,a)}"{description}, from=1-3, to=2-1]
    \arrow["e"{description}, from=1-3, to=2-5]
    \arrow[from=1-4, to=1-5]
\end{tikzcd}\]
The fibre of $(b,a)$ over $(B,A) \in \M \times \M$ is the quotient moduli stack of extensions $[\Ext^1_{\rep} (A,B)/\Hom_{\rep}(A,B)]$ where $\Hom_{\rep}(A,B)$ remembers the automorphism data of objects in extensions \cite{schiffmann2006lectures}. This is true since fixing the endpoints of an extension, the automorphism group $\text{Aut}(\xi_E) \cong \Hom(A,B)$ where $\xi$ is an extension \eqref{ses}. The fibre\footnote{In the sheaf-theoretic viewpoint, the fibre of $b$ over $B \in \M$ is a Quot scheme.} of $e$ over $E$ is the moduli of sub-representations of $E$, which parametrises the short exact sequences with a fixed $E$. Some comments about the nature of morphisms $a,e,b$ can be found in \cite{bridgeland2012introduction}. The stack $[\Ext^1_{\rep} (A,B)/\Hom_{\rep}(A,B)]$ contains two kinds of information: the classes of extensions and the automorphisms that these extensions possess. In a coarse moduli space, we forget the automorphisms. Note that we have slowly progressed towards the homological side of $\rep$ from the geometric knowledge about the same. More will be supplied by the motivic Hall algebra for our study.

Recall the moduli stack $\M = \bigsqcup_{d\in \mathbb{N}Q_0}\md$.  We define the relative Grothendieck group $K_0(St/\M)$ to be the free abelian group generated by the morphisms of stacks $[X \xrightarrow{f} \M]$ where $X$ is an algebraic stack, modulo cut-and-paste relations
\begin{equation}
[X \xrightarrow{f} \M] = [Y \xrightarrow{f_{|Y}} \M] + [U \xrightarrow{f_{|U}} \M]
\end{equation}
for a closed substack $Y \subset X$ and $U=X\backslash Y$. We define {\it motivic Hall algebra} for $\rep$ to be
\begin{equation}
\hall \coloneq K_0(St/\M)
\end{equation}
where the product is defined by the correspondence of stacks $\M \times \M \leftarrow \M^{(2)} \to \M$ that we discussed earlier. Explicitly, we have
\begin{equation}
[X_1 \xrightarrow{f_1} \M]\ * [X_2 \xrightarrow{f_2} \M] = [Z \xrightarrow{e \circ h} \M]
\end{equation}
where $h$ is given the Cartesian square
\[\begin{tikzcd}
    Z && {\mathcal{M}^{(2)}} & {\mathcal{M}} \\
    {X_1 \times X_2} && {\mathcal{M} \times \mathcal{M}}
    \arrow["h", from=1-1, to=1-3]
    \arrow[from=1-1, to=2-1]
    \arrow["e", from=1-3, to=1-4]
    \arrow["{(b,a)}"', from=1-3, to=2-3]
    \arrow["{(f_1 \times f_2)}", from=2-1, to=2-3]
\end{tikzcd}\]
Hence, we claim that $X_1$ and $X_2$ are the families of subobjects and quotient representations in $\M$. We pullback $X_1 \times X_2$ to $\mathcal{M}^{(2)}$ along $(b,a)$ and then pushforward along $e$ to $\M$, and to a `first' approximation as in \cite{bridgeland2016hall}, that the Hall product of two such families is given by taking their universal extension. So, the motivic Hall algebra $\hall$ packages all the short exact sequences in $\rep$ or $\mod$ via the Hall correspondence. But there is room for a deeper statement that we will show below.

There are several Hall algebras formalisms: cohomological Hall algebras (CoHA, finitary Hall algebras, Hall algebras of constructible functions, but following Bridgeland \cite{bridgeland2016hall}, all of these Hall algebras can be thought of as different ways of taking ``(co)homology'' of the moduli stack of objects in an abelian category with convolution product provided by extension correspondence. In fact, our motivic Hall algebra is also a cohomology of a moduli stack in a sense, but we will not discuss it here. The motivation to do a Hall algebra varies through mathematics and physics; for instance, CoHA has been used in Reineke's program to understand Donaldson-Thomas invariants, stability conditions, and quantum torus algebra \cite{reineke2024donaldson}, and it has been used in physics to study BPS states as well \cite{kontsevich2011cohomological,kontsevich2008stability,harvey1998algebras,kucharski2017bps}. Anyway, we now move to our main result.

\lemma{\label{lemma1:label}
    The fibre $(b,a)$ over $(B,A): \M^2 \to \M \times \M$ is the moduli stack with the trivial action
    \begin{equation}
    [\Ext^1 (A,B) / \Hom(A,B)] \cong \Ext^1(A,B) \times B\Hom(A,B)
    \end{equation}
    where each extension $\xi \in \Ext^1(A,B)$ has the same stabilizer group $\Hom(A,B)$. The fibre is the coarse moduli space $\Ext^1(A,B)$ for which fibre dimension\footnote{For a quotient stack $[V/G]$, the geometric (or stacky dimension) is $\dim [V/G] = \dim V - \dim G$. Here we are only bothered about the coarse moduli space.} of the Hall correspondence is defined as the dimension of coarse extension space \\ $\epsilon_{A,B}:= \dim \Ext^1(A,B)$.
}

\proof{
    This is true since any automorphism $\phi$ of $0 \to B \to E \to A \to 0$ with fixed $A$ and $B$ has the form $\phi = id_E + \iota f \pi $ for every $f \in \Hom(A,B)$. Indeed, one can show that $\Hom(A,B) \to \Aut(\xi_E)$ ($f \mapsto id_E + \iota f \pi$) is an isomorphism. An automorphism $\phi$ does not change the Yoneda class of extension $\xi_E$. Hence, the action of $\Hom(A,B)$ is trivial. Every point in $\Ext^1(A,B)$ has the same stabilizer group $\Hom(A,B)$. 

 Because the quotient is by a trivial action, we can write
 \begin{equation}
 [\Ext^1 (A,B) / \Hom(A,B)] \cong \Ext^1(A,B) \times B\Hom(A,B)
 \end{equation}
The coarse moduli space (forgetting the stabilizers) is $\Ext^1(A,B)$.
}

\remark{
    For our hereditary algebra, $kQ$, the stack dimension of the moduli stack $[\Ext^1(A,B)/ \Hom(A,B)]$ is $\dim \Ext^1(A,B)-\dim \Hom(A,B) = - \langle \dim A, \dim B \rangle_Q$ where $\langle \dim A, \dim B \rangle_Q$ is the Euler form which is a combinatorial information.
}

\lemma[AR Duality for Hereditary Algebra]{\label{lemma2:label}
    AR duality is a natural isomorphism $D\overline{\Hom}_k(B,\tau A) \cong \Ext^1(A, B)$ where $D=Hom_k(-,k)$. For hereditary algebra (gl.dim$ \leq 1$), there is no stabilization needed for Hom-space, so one simply writes (after dualizing both sides)
    \begin{equation}\label{ARduality}
     D\Ext^1(A,B) \cong \Hom_k(B, \tau A)
     \end{equation} 
 for all $A$ and $B$ in $\mod$.

\proof{
    See Chapter~4 (Corollary~2.14) in \cite{assem2006elements}.
}

\remark{\label{remark:label}
    In Dynkin case, when we set $B= \tau A$, then $\End_k(\tau A) \cong k$ so $DExt^1(A, \tau A) \cong k$ and dualizing both sides give $Ext^1(A,\tau A) \cong k$. Hence, the Hall fibre over $(\tau A,A)$ has the {\it coarse} moduli space $\mathbb{A}^1$.

    However, the fact that $\Ext^1(A,\tau A) \cong k$ is not necessarily true for tame or wild algebras.
}

\theorem{\label{theorem1:label}
    Let $k$ be an algebraically closed field and $\mod$ be the path algebra for a Dynkin quiver $Q$. The Ext-dimension functions $\{ \ext \}_{A  \text{ non-proj} \in \mod}$, from the Hall correspondence, can determine the Auslander-Reiten sequences $0 \to \tau A \to E \to A \to 0$. 
}

\proof{
    By Lemma~\ref{lemma2:label}, we write $\ext = \dim \Hom (B, \tau A)$ where $A,B \in \mod$ are indecomposables and $A$ is a non-projective module. Since $\mod$ is Hom-finite, $\dim \Hom (B, \tau A)$ for all the indecomposables $B$ determine the representable functor $\Hom(-,\tau A)$. Hence, by the Yoneda lemma, together with the Krull-Schmidt property, this uniquely determines $\tau A$.

    From Lemma~\ref{lemma1:label} and Remark~\ref{remark:label}, the coarse moduli space of fibre over $(\tau A, A)$ is $\Ext^1(A, \tau A) \cong \mathbb{A}^1$. Hence, the stratification is just 
    \begin{equation}
    \A^1 = \{0\} \sqcup k^\times
    \end{equation}
    where $\{0\}$ corresponds to the split extension\footnote{Recall that $\Ext^1(A,B)$ as an abelian group with Baer sum addition has an identity\\ $0 \to B \to B \oplus A \to A \to 0$.}
    \begin{equation}
    0 \to \tau A \to \tau A \oplus A \to A \to 0
    \end{equation}
    and $k^\times$ corresponds to the non-split extension classes in $\Ext^1(A, \tau A)$.  Moreover, the Auslander-Reiten theorem characterizes the class of AR sequences $\delta$  generating the socle of $\Ext^1(A, \tau A)$ as a left $\End(\tau A)$-module. Since $\End(\tau A) \cong k$, one has $\Ext^1(A, \tau A)$, which is simple, equal to its socle. Thus, we get that $\delta \in k^\times $, which is the non-split stratum of our coarse moduli space. This provides a way of obtaining AR theory from the correspondence of stacks $\M \times \M \leftarrow \M^{(2)} \to \M$.
}

\corollary{
    For a Dynkin quiver $Q$, we can obtain the full mesh of Auslander-Reiten quiver $\Gamma(\mod)$ from the Ext-dimension function $\{ \ext \}_{A \text{ non-proj}}$.
}

\proof{
    Any two nonzero classes $\delta, \delta' \in k^\times$ satisfy $\delta' = \lambda \delta$, which makes $(id_{\tau A}, \lambda id_{E}, id_{A})$ an isomorphism of extensions, so the $E$ is determined up to isomorphism by the open stratum. We know that $\mod$ is a k-linear Hom-finite with $\End(X) \cong k$ for all indecomposables $X$. Hence, Krull-Schmidt gives a unique decomposition $E \cong \bigoplus_X  X^{a_X}$ as indecomposable summands which are the sources of the irreducible maps into $A$ in the mesh $\Gamma (\mod)$. Hence, applying the previous theorem for all the indecomposables (non-projectives), we can reconstruct $\Gamma(\mod)$ completely.
}


Note that the stratification data in coarse moduli space $\Ext^1(A,\tau A)$ contains both split and non-split representations corresponding to the zero and non-zero point in the geometry, but the coarse moduli space distinguish between the split and non-split in the data of extensions, which we need for AR sequences, but AR theory further organises these non-split extensions to almost split extensions \cite{schiffler2014quiver}. But since our case is Dynkin, the almost split sequence organizes itself in the open stratum of $\A^1$. 

Our point in this note was that quiver moduli stacks contain more than just counting. At Hall correspondence, it already knows about the structure of extensions on $\rep$. Once we recognize it, we can pass to AR theory. Thus, the passage from quiver moduli to the motivic Hall algebra and Hall geometry, and then to AR theory, can be thought of as a journey from enumerative geometry to homological representation theory. We chose AR theory as a specific interest; we can generalize this bridge to other homological interests. 

{\it Acknowledgments :} I am indebted to Amit Kuber for introducing and teaching me about quivers and their representations.

\appendix
\section{AR Triangles and Happel's Theorem for Triangulated Categories}

This appendix is presented for intellectual consumption about AR triangles in $D^b(\mod)$.

There have been attempts to understand the derived Hall algebra \cite{toen2006derived,xiao2008hall,kontsevich2008stability}. But our motivation and purpose differ from those of derived Hall algebras. The passage from geometric information to our correspondence of stacks leads to the structure of the abelian category $\rep$ and thus the Auslander-Reiten theory as discussed above. Similar to Auslander-Reiten (AR) sequences in $\mod$, we study the Auslander-Reiten triangles in $D^b(\mod)$. AR sequences in $\mod$ are {almost split sequences} $Ext^1(A,\tau A)$
\begin{equation}
0 \to \tau A \to E \to A \to 0
\end{equation}
where $\tau$ is the AR translate, $A$ and $\tau A$ are indecomposables, and $A$ is a non-projective module. For $A_2$, we have an AR sequence
\begin{equation}
0 \to S_2 \to P_1 \to S_1 \to 0
\end{equation}
In $D^b(\mod)$, an AR triangle is
\begin{equation}
\tau A \to E \to A \to \tau A[1]
\end{equation}
where $[1]$ denotes the shift (an automorphism of the category). But studying such triangles is non-trivial. For this, we find a triangulated category which is equivalent to $D^b(\mod)$ and study the AR triangles in that category. We know that $\mod$ is a hereditary algebra and hence has finite global dimension (in fact, one), but in general, for any finite-dimensional algebra of finite global dimension, we have the following theorem due to Happel \cite{happel1988triangulated}, also see \cite{happel1991auslander}.
\begin{theorem}[Happel]
Let $\la$ be a finite dimensional $k-$algebra and $\la$ has finite global dimension ($\text{gl.dim} < \infty$). There exists a triangle equivalence between categories defined by a functor
\begin{equation}
\mu: D^b(\text{mod-}\Lambda) \to \underline{\text{mod-}}\hat{\Lambda}
\end{equation}
where $\underline{\text{mod-}}\hat{\la}$ is the stable module category over repetitive algebra $\hat{\la}$.
\end{theorem}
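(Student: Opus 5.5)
The plan is to build an exact functor $\mu\colon D^b(\text{mod-}\Lambda)\to\underline{\text{mod-}}\hat\Lambda$ that extends the obvious embedding of modules, and then to prove it is fully faithful and dense. Recall that $\hat\Lambda$ is the locally bounded algebra whose indecomposable projectives are indexed by $\mathbb{Z}\times\{\text{vertices}\}$. In the matrix picture it has $\Lambda$ on the diagonal and $D\Lambda$ on the subdiagonal. A $\hat\Lambda$-module is therefore a $\mathbb{Z}$-graded family $(M_i)$ of $\Lambda$-modules together with maps $\varphi_i\colon D\Lambda\otimes_\Lambda M_i\to M_{i+1}$ satisfying $\varphi_{i+1}\circ(1\otimes\varphi_i)=0$.

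The first step is to establish that $\hat\Lambda$ is self-injective: its projective modules coincide with its injective modules, because $D\hat\Lambda\cong\hat\Lambda$ up to the Nakayama shift. Hence $\text{mod-}\hat\Lambda$ is a Frobenius category, and $\underline{\text{mod-}}\hat\Lambda$ is triangulated with suspension $\Omega^{-1}$. Let $j\colon\text{mod-}\Lambda\to\text{mod-}\hat\Lambda$ send a module to the $\hat\Lambda$-module concentrated in degree $0$.

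The second step, and the key computation, is
\begin{equation*}
\underline{\Hom}_{\hat\Lambda}(jX,\Omega^{-n}jY)\cong\Ext^n_\Lambda(X,Y)\quad\text{for all } n\ge 0 .
\end{equation*}
I would prove it by writing down the injective (equivalently projective) $\hat\Lambda$-resolution of $jY$ explicitly: it is assembled degree by degree from an injective $\Lambda$-resolution of $Y$. I would then check that the stable maps out of $jX$ reduce to maps into the degree-$0$ part of the cosyzygies.

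The third step uses this to produce $\mu$. Since $\text{gl.dim}\,\Lambda<\infty$, we have $D^b(\text{mod-}\Lambda)\simeq K^b(\text{inj-}\Lambda)$. To a bounded complex of injectives $I^\bullet$ I would assign the $\hat\Lambda$-module obtained by placing the components of the complex in the graded slots, using $D\Lambda\otimes_\Lambda\nu^{-1}I=I$ so that the differential supplies the structure maps $\varphi$. I would then verify that homotopies become maps factoring through projective-injectives, and that mapping cones become triangles. This yields an exact functor $\mu$ with $\mu\circ(\text{incl})\cong j$.

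The fourth step is full faithfulness. Modules generate $D^b(\text{mod-}\Lambda)$ as a triangulated category, and step two shows that $\mu$ induces isomorphisms on all $\Hom(X,Y[n])$ between stalk complexes. A standard dévissage (the five lemma applied along triangles) then extends this to all objects.

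The last step is density, which is where I expect the real obstacle to lie. Every finitely generated $\hat\Lambda$-module is an iterated extension of modules concentrated in a single degree, so the essential image must be shown to contain $M[i]$, the module $M$ placed in degree $i$, for every $i$. To handle this, I would show that an injective envelope $M\hookrightarrow I$ of $\Lambda$-modules produces a short exact sequence of $\hat\Lambda$-modules relating $M[i+1]$ to $(\nu^{-1}M)[i]$ modulo a projective-injective. In the stable category this gives $M[i+1]\cong\Omega^{-1}$ applied to the image of a bounded complex. The argument needs $\nu^{-1}=\mathbf{R}\Hom(D\Lambda,-)$ to take values in bounded complexes, and this holds exactly when the global dimension is finite. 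An induction on the degree range then shows that every stalk module, and hence every $\hat\Lambda$-module, lies in the triangulated image. I expect this induction, with its careful bookkeeping of $\nu$ against the grading shift, to be the main difficulty.
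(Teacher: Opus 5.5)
The paper does not prove this statement; it only quotes it from Happel. Your architecture (self-injectivity of $\hat\Lambda$, an Ext comparison for modules concentrated in degree $0$, extension to an exact functor, d\'evissage, density by induction on degrees) is the standard one. However, \textbf{Step~3 fails as written}, and Steps~4--5 depend on it.

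A $\hat\Lambda$-module is not a complex but a $\nu$-twisted complex: the structure maps are $\varphi_n\colon \nu M_n\to M_{n+1}$ with $\varphi_{n+1}\circ\nu(\varphi_n)=0$, where $\nu=D\Lambda\otimes_\Lambda-$. If you put $\nu^{-1}I^n$ in slot $n$, then $\varphi_n$ must be a map $I^n\to\nu^{-1}I^{n+1}$, and $d^n\colon I^n\to I^{n+1}$ is not such a map. The identification $\nu\nu^{-1}I\cong I$ can be used at only one end of each arrow.

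For a two-term complex the recipe does make sense. The module with $\nu^{-1}I^0$ in degree $-1$, $I^1$ in degree $0$ and structure map $d$ is the cone of $jd\colon jI^0\to jI^1$. Note that this is $\Omega^{-1}jY$, not $jY$, when $I^0\to I^1$ coresolves $Y$.

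For three terms, making the types match forces slot contents $\nu^{-2}I^0,\nu^{-1}I^1,I^2$. The first slot is then the underived $\nu^{-1}$ of the projective $\nu^{-1}I^0$, which destroys information: for $\Lambda=k(1\to 2)$ it kills the simple projective. The correct object uses $\mathbf{R}\nu^{-1}$ instead. It is an iterated cone built from $\hat\Lambda$-injective envelopes at every stage. Well-definedness, homotopy invariance and exactness of \emph{that} assignment are the real content of the theorem, and ``I would then verify'' does not supply them.

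The simplest repair is to build $\mu$ abstractly. The functor $j$ is exact, so it induces a triangle functor $D^b(\text{mod-}\Lambda)\to D^b(\text{mod-}\hat\Lambda)$. Compose this with the Verdier quotient by $K^b(\text{proj-}\hat\Lambda)$ and with the equivalence $D^b(\text{mod-}\hat\Lambda)/K^b(\text{proj-}\hat\Lambda)\simeq\underline{\text{mod-}}\hat\Lambda$ (Buchweitz, Keller--Vossieck, Rickard), which holds because $\text{mod-}\hat\Lambda$ is Frobenius. Exactness and $\mu|_{\text{mod-}\Lambda}\cong j$ are then automatic. Moreover, the map $\mathrm{Hom}(X,Y[n])\to\underline{\mathrm{Hom}}(jX,\Omega^{-n}jY)$ induced by $\mu$ is precisely $\mathrm{Ext}^n_\Lambda(X,Y)\to\mathrm{Ext}^n_{\hat\Lambda}(jX,jY)$ induced by $j$.

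Your Step~2 should be set up as exactly this comparison. Only the degree-$0$ parts of a $\hat\Lambda$-injective coresolution of $jY$ form an injective coresolution of $Y$; the coresolution spreads into negative degrees (for $I$ injective, $\Omega^{-1}jI$ is $\nu^{-1}I$ placed in degree $-1$). Since $\mathrm{Hom}_{\hat\Lambda}(jX,-)$ sees only those degree-$0$ parts, the comparison goes through. An abstract isomorphism not known to be induced by $\mu$ would not suffice for the d\'evissage.

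Two further points:
\begin{itemize}
\item Step~4 also needs the case $n<0$, namely $\mathrm{Hom}_{\hat\Lambda}(jX,\Omega^m jY)=0$ for $m\ge 1$. This holds because $\Omega^m jY$ embeds in a projective whose socle lies in degrees $\ge 1$.
\item Step~5 is fine once stated precisely. Write $M\langle i\rangle$ for $M$ concentrated in degree $i$. For $I$ injective there is an exact sequence $0\to I\langle i+1\rangle\to\hat P\to(\nu^{-1}I)\langle i\rangle\to 0$ with $\hat P$ projective-injective. Finite injective (respectively projective) resolutions then move stalk modules up (respectively down) one degree.
\end{itemize}

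Finally, boundedness of $\mathbf{R}\nu^{-1}$ only requires $\mathrm{pd}\,D\Lambda<\infty$. It holds for self-injective $\Lambda$, where density fails, so it is not ``exactly'' finite global dimension. What density actually uses is that every $\Lambda$-module has finite projective and finite injective dimension.
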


We presented in a talk\footnote{“Some Aspects of Happel’s Theorem and the Auslander-Reiten Triangles,” 2025.
Slides available at https://aayushayh.github.io/talks/mth619presentation.pdf.} how AR triangles are easy to understand in a stable module category $\underline{\text{mod-}}\hat{\la}$ which is a triangulated category\footnote{Happel shows \cite{happel1988triangulated} that $\underline{\text{mod-}}\hat{\la}$ is a Frobenius category and a stable category of a Frobenius category is a triangulated category.} and under this functor,one can study AR triangles in $D^b(\text{mod-}\l)$. Because it is easier to understand the shift functor (which happens to be the cosyzygy functor $\Omega^{-1}$) in $\underline{\text{mod-}}\hat{\la}$.

It would be interesting to see whether there is a more direct route to studying AR triangles in geometric terms, similar to our correspondence between the Hall geometry and the AR quiver for an abelian category.

\bibliography{motivic}
\bibliographystyle{utphys}

\end{document}